\newtheorem{thm}{Theorem}
\newtheorem{lem}{Lemma}
\def\C{\mathbb{C}}
\def\R{\mathbb{R}}
\newcommand{\be}{\begin{equation}}
\newcommand{\ee}{\end{equation}}
\newcommand{\cF}{\mathcal{F}}
\newcommand{\cE}{\mathcal{E}}
\newcommand{\cQ}{\mathcal{Q}}
\newcommand{\re}{\mathop{\rm Re}}
\subjclass[2010]{Primary 11M41;
Secondary 11M26, 11M35}
\keywords{Euler products, $L$-functions, non-trivial zeros}
\begin{document}
\title{Combinations of $L$-functions and their Non-coincident Zeros for $\sigma>1$}
\author{Scott Kirila}
\date{\today}
\address{Department of Mathematics, University of Exeter, Exeter, EX4 4QF, UK}
\email{s.kirila@exeter.ac.uk}
\maketitle

\section{Introduction}
\begin{abstract}
The purpose of this note is to build upon work of Booker--Thorne and Righetti concerning zeros of algebraic combinations of $L$-functions. Namely, we show that two generic combinations of functions from a wide class of Euler products have non-coincident zeros in the half-plane $\sigma>1$.
\end{abstract}

In 1936, Davenport--Heilbronn~\cite{DaHe} showed that Hurwitz zeta functions $\zeta(s,\alpha)$ with positive rational parameter $\alpha\ne \frac12$ or $1$ have infinitely many zeros to the right of the line $\re(s)=1$. This provided an infinite family of functions which vanish in the half-plane of absolute convergence, despite being a linear combination of Dirichlet $L$-functions, which do not vanish in this region; in particular, such linear combinations fail to obey a Riemann hypothesis. Since then, many authors have expanded upon Davenport and Heilbronn's work, such as Saias--Weingartner~\cite{SaWe}, who proved a comparable result for general linear combinations of Dirichlet $L$-functions. Their method, which relies on a type of weak universality property, has since been refined by Booker--Thorne~\cite{BT} to include algebraic combinations of $L$-functions associated to automorphic forms (assuming the generalized Ramanujan conjecture), and by Righetti~\cite{R} to a larger class of Euler products.

It is generally expected that two different primitive $L$-functions have (almost) no coincident zeros. In this direction, Fujii~\cite{Fu} showed that two primitive Dirichlet $L$-functions have a positive proportion of zeros which are non-coincident. One might expect that the same is typically true of two combinations of $L$-functions, assuming we avoid trivial obstructions such as having a common factor. The purpose of this note is to explore this question in the region $\sigma>1$. To formally state our result, we denote by $\mathcal{E}$ be the class of Dirichlet series appearing in \cite{R}. Namely, it consists of functions $F:\C\to \C$ satisfying the following axioms:
    \begin{enumerate}
    \setlength\itemsep{1em}
        \item[\textbf{(E1)}] the function $F$ is given by a Dirichlet series, $F(s)=\sum_{n=1}^{\infty}\frac{a_F(n)}{n^s}$, which converges absolutely for $\sigma>1$;
        
        \item[\textbf{(E2)}] it has an Euler product, so that $\log F(s)=\sum_pF_p(s)=\sum_p\sum_{k=1}^{\infty}\frac{b_F(p^k)}{p^{ks}}$, which also converges absolutely for $\sigma>1$;
        
        \item[\textbf{(E3)}] there is a positive constant $K_F$ such that $|a_F(p)|\le K_F$ for all primes $p$;
        
        \item[\textbf{(E4)}] the sum $\sum_p\sum_{k=2}^{\infty}\frac{|b_F(p^k)|}{p^k}$ is finite;
        
        \item[\textbf{(E5)}] for any $F,G\in \cE$, there is a complex number $m_{F,G}$ such that
            \[
            \sum_{p\le x}\frac{a_F(p)\overline{a_G(p)}}{p}=(m_{F,G}+o(1))\log\log x
            \]
            with $m_{F,F}>0$. We say that $F,G\in \cE$ are \emph{orthogonal} if $m_{F,G}=0$.
    \end{enumerate}
Examples of functions in this class are provided in \cite[Section~1.1]{R} and conjecturally include all $L$-functions. 
Righetti notes that $\cE$ likely contains other types of functions as well, since elements are neither required to satisfy a functional equation nor to possess a meromorphic continuation to the rest of the plane. An example of such is given by the Euler product
    \[
    \mathcal{Z}(s):=\prod_{n=1}^{\infty}\left(1-\frac{1}{p_{2n}^s}\right)^{-1};
    \]
as usual, we write $p_k$ to denote the $k$-th prime when ordered by increasing size. It follows from work of Grosswald--Schnitzer\cite{GrSc} that its square, $\mathcal{Z}(s)^2$, may be meromorphically continued to the half-plane $\sigma>0$ with a simple pole at $s=1$ and precisely the same zeros as the Riemann zeta function. Thus, although $\mathcal{Z}(s)$ is in $\cE$, it also has branch points at $s=1$ and any non-trivial zeros (with odd multiplicity) of the zeta function.

The polynomials we use for our algebraic combinations take their coefficients from the ring of $p$-finite Dirichlet series which converge absolutely in the half-plane $\sigma>1$, namely
    \[
    \cF:=\left\{\sum_{n\in \langle\cQ\rangle}\frac{a(n)}{n^s}\text{ abs. conv. for }\sigma\ge \alpha: \cQ\subset\mathcal{P}\text{ has finitely many elements}\right\}.
    \]
for example, $\cF$ contains all non-vanishing finite Euler products and their inverses, as well as any Dirichlet polynomial. Our main result is the following.

\begin{thm}\label{thm: >1}
	Let $f$ and $g$ be relatively prime polynomials over $\cF$ in $N_f$ and $N_g$ many variables, respectively, and assume that neither $f$ nor $g$ is a monomial. Suppose $F_1,\ldots,F_{N_f}\in\cE$ are pairwise orthogonal, and similarly for $G_1,\ldots,G_{N_g}\in \cE$. Then there is a zero of $f(F_1(s),\ldots,F_{N_f}(s))$ to the right of $\sigma=1$ which is not a zero of $g(G_1(s),\ldots,G_{N_g}(s))$, and vice versa.
	
\end{thm}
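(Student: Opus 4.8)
The plan is to combine a joint weak-universality statement for the whole family of functions involved with a Rouch\'e argument, the aim being to produce a zero of $f(F_1,\ldots,F_{N_f})$ at a height where one simultaneously knows $g(G_1,\ldots,G_{N_g})\ne 0$; the ``vice versa'' half will then follow by exchanging the two sides. First I would record the universality input. Write $H_1,\ldots,H_M$ for the distinct functions among the $F_j$ and $G_i$, and form the Hermitian positive-semidefinite matrix $\bigl(m_{H_a,H_b}\bigr)$ supplied by (E5), which has positive diagonal since $m_{H,H}>0$. Because the $F_j$ are pairwise orthogonal and the $G_i$ are pairwise orthogonal, the only off-diagonal entries that can be nonzero pair a function occurring \emph{only} in $f$ with one occurring \emph{only} in $g$; hence the kernel of this matrix --- which governs the multiplicative relations among the $H_a$ along vertical lines just to the right of $\sigma=1$ --- contains no vector supported on the variables of $f$ alone, nor on those of $g$ alone, and every nonzero kernel vector genuinely mixes an $f$-only variable with a $g$-only variable. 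The weak-universality theorem I would use (established as in \cite{R} for a single combination, from (E1)--(E5), linear independence of $\{\log p\}_p$ over $\Q$, Weyl equidistribution, and (E4) to absorb the prime-power tails) then says: for each $\sigma_0>1$ close enough to $1$ there is a set of $t_0$ of positive lower density such that on the disc $\overline{D}=\overline{D}(\sigma_0+it_0,\rho)$ with $\rho<\sigma_0-1$ the tuple $(H_1,\ldots,H_M)$ uniformly approximates any prescribed holomorphic tuple respecting the admissible relations; in particular $F_1,\ldots,F_{N_f}$ may be prescribed freely on $\overline{D}$, while $G_1,\ldots,G_{N_g}$ are then constrained only through those relations, never internally.

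Next I would reduce the theorem to an algebraic statement. Since $f$ is not a monomial it is a non-constant, hence non-unit, element of $\cF[x_1,\ldots,x_{N_f}]$, so $\{f=0\}$ meets $(\C^{\times})^{N_f}$ in a nonempty hypersurface whose smooth locus is dense in each component; likewise $g$ is not the zero polynomial. Combining the universality on $\overline{D}$ with Rouch\'e --- approximate $F_1,\ldots,F_{N_f}$ on $\overline{D}$ by a small perturbation of a target $w=(w_1,\ldots,w_{N_f})$ sitting at a smooth point of $\{f=0\}$, chosen so that $f$ evaluated along the perturbed tuple winds once around $0$ on $\partial D$, which is possible precisely because $w$ is a smooth point --- one gets a genuine zero $\rho$ of $f(F_1,\ldots,F_{N_f})$ inside $D$, hence with $\sigma>1$; and if the region into which the $G_i$ are forced on $\overline{D}$ avoids $\{g=0\}$, then $g(G_1,\ldots,G_{N_g})(\rho)\ne 0$. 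Thus it suffices to find a smooth point $w\in\{f=0\}$ together with a choice of $G_i$-targets, admissible alongside $F_j\approx w$, that avoids $\{g=0\}$. If no such choice existed, then every zero of $f(F_1,\ldots,F_{N_f})$ in $\sigma>1$ would also be a zero of $g(G_1,\ldots,G_{N_g})$; passing to Zariski closures --- legitimate because $f$ and $g$ are polynomials --- this says that on the subtorus $\mathbb{T}\subseteq(\C^{\times})^M$ cut out by the admissible relations one has $V(f|_{\mathbb{T}})\subseteq V(g|_{\mathbb{T}})$.

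The crux is to contradict this inclusion using the relative primality of $f$ and $g$. By the relation structure above, both projections $\mathbb{T}\to(\C^{\times})^{N_f}$ and $\mathbb{T}\to(\C^{\times})^{N_g}$ are surjective, distinct monomials of $f$ stay distinct on $\mathbb{T}$ (and likewise for $g$), and $f|_{\mathbb{T}}$, $g|_{\mathbb{T}}$ are non-units obtained by pulling $f$, $g$ back along these surjections. The inclusion $V(f|_{\mathbb{T}})\subseteq V(g|_{\mathbb{T}})$ then forces $g$ to vanish along every fibre of $\mathbb{T}\to(\C^{\times})^{N_f}$ lying over $\{f=0\}$; pushing such a fibre forward along $\mathbb{T}\to(\C^{\times})^{N_g}$, and using that each admissible relation genuinely couples an $f$-only variable to a $g$-only variable, this is possible only if a non-unit factor of $f$ and a non-unit factor of $g$ become associate after the monomial substitutions defining $\mathbb{T}$; tracing this back yields a common non-unit factor of $f$ and $g$ in $\cF[x_1,\ldots,x_{N_f},y_1,\ldots,y_{N_g}]$, contradicting relative primality. (When there are no admissible relations at all --- the two families are mutually orthogonal and share no function --- $\mathbb{T}$ is the full torus and the inclusion already forces $g\equiv 0$; the general case reduces to this one by the toric combinatorics of the relation lattice.)

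I expect this last step to be the main obstacle. Two points need care. First, the relations among the $H_a$ need not have rational coefficients, so $\mathbb{T}$ is a priori only a connected complex-analytic subgroup rather than an algebraic subtorus; one must either argue the relevant relations can be taken rational (as for genuine $L$-functions) or, more robustly, work throughout with Zariski closures --- which \emph{are} algebraic subtori --- while checking that intersecting with the algebraic hypersurfaces $\{f=0\}$ and $\{g=0\}$ does not destroy density. Second, the relation ``offsets'' are $s$-dependent and merely bounded rather than constant, so the $G_i$-targets compatible with a given $F_j$-target trace out a positive-dimensional family rather than a single point, and one must verify this family still meets the complement of $\{g=0\}$; this again reduces to the toric bookkeeping together with coprimality. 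By contrast, the universality input and the Rouch\'e localisation are routine adaptations of the machinery already in \cite{R}, and the ``vice versa'' clause is immediate once the main inequality is proved, by swapping the roles of $(f,F_1,\ldots,F_{N_f})$ and $(g,G_1,\ldots,G_{N_g})$.
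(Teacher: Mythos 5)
Your strategy diverges from the paper's, and at two crucial points it rests on inputs that are not available (indeed not true) in the half-plane of absolute convergence. The universality you invoke --- a positive-lower-density set of shifts $t_0$ for which $(H_1,\ldots,H_M)$ uniformly approximates an \emph{arbitrary prescribed holomorphic tuple} on a disc $\overline D(\sigma_0+it_0,\rho)$, subject only to ``admissible relations'' cut out by the kernel of the Gram matrix $(m_{H_a,H_b})$ --- is a Voronin-type statement that fails for $\sigma>1$: there the values of each $F\in\cE$ are rigidly constrained by the absolutely convergent expansion of $\log F$, and no approximation of prescribed holomorphic targets on a fixed disc is possible. What is actually available (Lemma~\ref{prop: R}, i.e.\ Righetti's Proposition~1) is much weaker: for fixed $y,R$ and $\sigma$ sufficiently close to $1$, the tails $\prod_{p>y}F_{j,p}(\sigma+it_p)$, with an \emph{independent} twist $t_p$ for each prime, jointly attain any prescribed values in the annulus $R^{-1}\le|z_j|\le R$. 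One must then convert these twists into genuine points with $\sigma>1$ by Bohr almost periodicity plus Rouch\'e applied to the Dirichlet series $f(F_1(s),\ldots)$ and $g(G_1(s),\ldots)$ themselves; your Rouch\'e step, which needs disc-level approximation of arbitrary targets, cannot be run as written. Relatedly, the Gram-matrix kernel does not ``govern multiplicative relations'' among values in $\sigma>1$, and no such structure is needed: the only interaction between the two families that must be handled is literal equality $F_i=G_j$, which the paper treats by identifying the corresponding variables; cross non-orthogonality is harmless because the surjectivity lemma is applied to the combined list of distinct functions.

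Second, the step where coprimality is supposed to enter --- deducing a common factor of $f$ and $g$ from an inclusion $V(f|_{\mathbb{T}})\subseteq V(g|_{\mathbb{T}})$ on a subtorus via ``toric combinatorics'' --- is not carried out, and you yourself flag the obstructions (the relations need not be rational, so $\mathbb{T}$ need not be algebraic; the offsets are $s$-dependent). As it stands this is the heart of your argument and it is missing. The paper's use of coprimality is far simpler and avoids all of this: Hilbert's Nullstellensatz (Lemma~\ref{lem: bt 1}) produces a single point $x\in(\C^{\times})^N$ with $f(x)=0$ and $g(x)\ne 0$, and the coefficient-perturbation statement (Lemma~\ref{lem: bt 2}) makes that point stable under small changes of the coefficients. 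You also never address the fact that $f$ and $g$ have coefficients in $\cF$ rather than $\C$: these coefficients are $p$-finite Dirichlet series which may vanish at individual points, which is why the paper folds the local factors at the finitely many primes $p\le p_{fg}$ into auxiliary polynomials $\tilde f,\tilde g$, chooses $t_0$ on the line $\sigma=1$ where all coefficients are nonvanishing (continuity plus the maximum modulus principle), and only then runs the algebra before invoking Lemma~\ref{prop: R} for the remaining primes and finishing by almost periodicity. Without the correct weak-universality input, the Nullstellensatz step, and the treatment of the $\cF$-coefficients, the proposal does not amount to a proof.
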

\noindent Once we have one non-coincident zero, we are guaranteed by almost-periodicity the existence of $\gg T$ many up to height $T$ in any vertical strip containing this point. There can be at most $\ll T$ zeros in any closed strip in $\sigma>1$, and so we find that a positive proportion of zeros in such regions are non-coincident. It is also worth noting that our proof shows that there are non-coincident zeros arbitrarily close to the line $\sigma=1$. Lastly, we briefly remark that an analogue of Theorem~\ref{thm: >1} can also be proved in closed strips contained in the region $\frac12<\sigma<1$ using a joint universality result due to Lee--Nakamura--Pa\'nkowski~\cite{LNP} for the Selberg class, assuming a quantitative form of Selberg's orthogonality conjecture\footnote{That is, a quantitative refinement of (E5).}. 

\subsection{A couple of examples}
\subsubsection*{Hurwitz zeta functions}
Suppose $1\le a\le q$ with $(a,q)=1$ and $q\ge 3$. The Hurwitz zeta function $\zeta(s,a/q)$ is defined as
	\[
	\zeta(s,a/q)=\sum_{n=0}^{\infty}(n+a/q)^{-s}
	\]
for $\sigma>1$, and it has a meromorphic continuation to the rest of the complex plane. It may also be expressed as a linear combination of Dirichlet $L$-functions, that is,
	\[
	\zeta(s,a/q)=\frac{q^s}{\varphi(q)}\sum_{\chi\bmod{q}}\overline{\chi}(a)L(s,\chi);
	\]
here $\varphi$ is the Euler totient function and the sum is over all Dirichlet characters modulo $q$. As mentioned in the beginning of the paper, Davenport and Heilbronn~\cite{DaHe} showed that $\zeta(s,a/q)$ has infinitely many zeros to the right of $\sigma=1$ if $q\ge 3$; that is, if $\zeta(s,a/q)$ is not simply a multiple of the Riemann zeta function. Now, the Dirichlet $L$-functions modulo $q$ are pairwise orthogonal functions in $\mathcal{E}$. Two linear polynomials in $\C[x_1,\ldots,x_{\varphi(q)}]$ are relatively prime unless one is a constant multiple of the other, which, for the polynomials $\sum_{\chi\bmod q_1}\overline{\chi}(a_1)x_{\chi}$ and $\sum_{\psi\bmod q_2}\overline{\psi}(a_2)x_{\psi}$, would require that $q_1=q_2$ and $\chi(a_1)\overline{\chi}(a_2)=c$ for all $\chi$ and some non-zero complex number $c$. Summing both sides of this last equation shows that we must have $c=1$ and $a_1=a_2$. Therefore, we may apply Theorem~\ref{thm: >1} to $\zeta(s,a_1/q_1)$ and $\zeta(s,a_2/q_2)$ provided that $q_1\ne q_2$ or $a_1\ne a_2$. That is, two distinct Hurwitz zeta functions (with rational parameters) have many non-coincident zeros in the half-plane $\sigma>1$ if $q\ge 3$.

\subsubsection*{$L$-functions associated to cuspforms}
The following can be viewed as a companion to \cite[Theorem~1.1]{BT}. Let $f_1,f_2\in S_k(\Gamma_1(N))$ be two holomorphic cuspforms and let $\Lambda_{f_j}(s)=\int_0^{\infty}f_j(iy)y^{s-1}dy$ be the associated complete $L$-functions, $j=1,2$. Here the region of absolute convergence is $\sigma>\frac{k+1}{2}$, as we have chosen not to normalize the Fourier coefficients for the sake of comparison. Booker and Thorne showed that the non-vanishing of $\Lambda_{f_j}(s)$ in the region $\sigma>\frac{k+1}{2}$ implies that $f_j$ must be an eigenfunction of the Hecke operators $T_p$ for all primes $p$ not dividing $N$. Instead, if we assume that $\Lambda_{f_1}(s)$ and $\Lambda_{f_2}(s)$ vanish at precisely the same non-empty set of points in the half-plane $\sigma>\frac{k+1}{2}$, then our Theorem~\ref{thm: >1} implies that $f_1=cf_2$ for some nonzero complex number $c$.

\section{Some lemmas}
As in \cite{BT}\cite{R}\cite{SaWe} for a single combination, the proof of Theorem~\ref{thm: >1} relies on a weak universality result in the region $\sigma>1$; below we state \cite[Prop.~1]{R}, which appears as our Lemma~\ref{prop: R}. Meanwhile, Lemma~\ref{lem: bt 1} produces an appropriate zero of $f$ at which $g$ does not vanish, while Lemma~\ref{lem: bt 2} helps to bridge the gap from working with $f(x)$ and $g(x)$ to $f(F_1(s),\ldots,F_{N_f}(s))$ and $g(G_1(s),\ldots,G_{N_g}(s))$.

\begin{lem}[Righetti]\label{prop: R}
Suppose $F_1,\ldots,F_N$ are distinct functions in $\mathcal{E}$. Then, for $y,R\ge 1$, there exists $\eta>0$ such that
	\begin{multline*}
	\left\{\left(\prod_{p>y}F_{j,p}(\sigma+it_p)\right)_{1\le j\le N}:t_p\in \R \text{ for each prime }p>y\right\}\\
	\supset \{(z_1,\ldots,z_N)\in\C^N:R^{-1}\le |z_j|\le R\text{ for each }1\le j\le N\}
	\end{multline*}
for every $\sigma \in(1,1+\eta]$.
\end{lem}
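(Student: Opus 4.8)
The plan is to take logarithms and reduce the assertion to a covering property for a Minkowski sum of circles in $\C^N$. Each local factor is $F_{j,p}(s)=\exp\!\big(\sum_{k\ge1}b_{F_j}(p^k)p^{-ks}\big)$, hence non-vanishing, and by (E2) the product over $p>y$ converges absolutely, uniformly in the choice of the $t_p$; so the set on the left is the coordinatewise exponential of
\[
\Big\{\Big(\textstyle\sum_{p>y}L_{j,p}(\sigma,t_p)\Big)_{1\le j\le N}\ :\ t_p\in\R\Big\},
\qquad
L_{j,p}(\sigma,t):=\sum_{k\ge1}\frac{b_{F_j}(p^k)}{p^{k(\sigma+it)}}.
\]
Since every $z$ with $R^{-1}\le|z|\le R$ equals $e^{w}$ for some $w$ with $|w|\le\sqrt{(\log R)^2+4\pi^2}$, it is enough to show that this set of logarithmic vectors contains the Euclidean ball $B_M=\{\vec w\in\C^N:\|\vec w\|\le M\}$ for a fixed radius $M=M(R,N)$ whenever $\sigma$ is close enough to $1$. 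Write $\vec L_p(\sigma,t)=p^{-\sigma}\vec a(p)\,e^{-it\log p}+\vec r_p(\sigma,t)$, where $\vec a(p)=(a_{F_j}(p))_j=(b_{F_j}(p))_j$ and $\|\vec r_p(\sigma,\cdot)\|_\infty\le\rho_p:=\sum_{k\ge2}\|(b_{F_j}(p^k))_j\|\,p^{-k}$. Axiom (E4) gives $\sum_p\rho_p<\infty$, uniformly for $\sigma\ge1$, and for each fixed $\sigma>1$ one has $\sum_{p>Q}\|\vec L_p(\sigma,\cdot)\|_\infty\to0$ as $Q\to\infty$; so, freezing $t_p=0$ for all but finitely many primes, we are reduced to a Minkowski sum of finitely many closed curves $t\mapsto\vec L_p(\sigma,t)$.

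The arithmetic input is a uniform lower bound for the ``mass in every direction'' carried by these curves. By (E3), $\|\vec a(p)\|\le K$ for some constant $K$; and by (E5), for every unit vector $\vec e\in\C^N$,
\[
\sum_{p\le x}\frac{|\langle\vec e,\vec a(p)\rangle|^2}{p}=\big(\vec e^{\,*}M\vec e+o(1)\big)\log\log x,\qquad M=\big(m_{F_i,F_j}\big)_{i,j}.
\]
Here is where a hypothesis on the $F_j$ enters: the argument needs that no non-trivial linear combination of the $a_{F_j}(p)$ vanishes on average, i.e.\ that the (positive-semidefinite) matrix $M$ is in fact positive definite --- which certainly holds when the $F_j$ are pairwise orthogonal as in Theorem~\ref{thm: >1}, since then $M=\mathrm{diag}(m_{F_1,F_1},\dots,m_{F_N,F_N})\succ0$ by (E5). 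Thus $\vec e^{\,*}M\vec e\ge\lambda>0$ uniformly in $\vec e$, and combined with $|\langle\vec e,\vec a(p)\rangle|\le K$ this yields $\sum_{y<p\le x}\frac{|\langle\vec e,\vec a(p)\rangle|}{p}\gg_\lambda\log\log x$ uniformly in $\vec e$; partial summation then gives $\sum_{p>y}p^{-\sigma}|\langle\vec e,\vec a(p)\rangle|\gg_\lambda\log\frac1{\sigma-1}\to\infty$ as $\sigma\to1^+$, still uniformly. The relevance is that $p^{-\sigma}|\langle\vec e,\vec a(p)\rangle|$ is exactly half the width, in the direction $\vec e$, of the circle $\{p^{-\sigma}\vec a(p)\zeta:|\zeta|=1\}$, which is the leading term of the $p$-th curve.

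What remains is a Steinitz-type rearrangement argument, in the spirit of Saias--Weingartner~\cite{SaWe} and Booker--Thorne~\cite{BT}: after discarding the primes with $p^{-\sigma}\|\vec a(p)\|\le2\rho_p$ (which together contribute only a set of bounded diameter, absorbed into $M$), the surviving curves are honest perturbations of circles centred at the origin whose total width in every direction exceeds any prescribed threshold once $\eta$ is taken small enough --- the threshold depending only on the fixed data $M$, $\sum_p\rho_p$, $K$, $y$ --- and one must then show that the Minkowski sum of such curves contains $B_M$ exactly. I expect this exact covering to be the main obstacle, for two reasons. First, one needs a genuine inclusion, not merely density, which seems to call for an iterative ``steering'' construction: the gap to a target $\vec w$ is driven to zero by using, at each stage, a block of primes that carries enough mass along the current gap direction, the condition $p^{-\sigma}\|\vec a(p)\|\to0$ preventing overshoot in the limit. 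Second --- and this is the delicate point --- the error terms $\vec r_p$ are not individually negligible, since $\sum_p\rho_p$ is a fixed constant coming from (E4); they have to be absorbed by the fact that the clean covering radius grows like $\log\frac1{\sigma-1}$ and so eventually dominates this error, which forces one to keep track of the accumulated error throughout the steering and to drive it back to zero along a reserved subsequence of primes. Granting the covering, exponentiating coordinatewise and unwinding the first reduction yields the claimed inclusion for every $\sigma\in(1,1+\eta]$.
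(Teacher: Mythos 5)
The paper itself offers no proof of this lemma: it is quoted as Proposition~1 of Righetti~\cite{R}, so the only comparison available is with Righetti's argument, whose architecture (pass to logarithms, isolate the leading terms $a_{F_j}(p)p^{-\sigma-it_p}$, use (E3)--(E5) to get a directional mass $\gg\log\frac{1}{\sigma-1}$ uniformly over unit vectors, then cover a ball in $\C^N$ by a Minkowski sum of near-circles) your outline reproduces correctly. The genuine gap is that the one step carrying the real content --- that a sum of finitely many perturbed circles contains a prescribed ball once the total width in every direction is large enough, where the perturbations $\vec r_p$ are only summable rather than individually negligible --- is exactly what you leave as ``the main obstacle,'' with a proposed steering/rearrangement scheme but no proof and an explicit unresolved worry about absorbing the accumulated error $\sum_p\rho_p$. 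That covering statement is the heart of the Davenport--Heilbronn/Saias--Weingartner mechanism, in the $N$-dimensional form of \cite[Lemma~2.2]{BT}, which Righetti adapts (the error from prime powers is beaten precisely because the clean directional mass grows like $\log\frac{1}{\sigma-1}$ while the error is $O(1)$, and $\eta$ is chosen so this holds for every $\sigma\in(1,1+\eta]$). Without proving that lemma, or citing it precisely and verifying its hypotheses against your bounds $p^{-\sigma}\|\vec a(p)\|\le K\sqrt{N}\,p^{-1}$, $\sum_p\rho_p<\infty$ and the lower bound $\gg\log\frac{1}{\sigma-1}$, the proposal is a plan rather than a proof.

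Your remark about where a hypothesis on the $F_j$ enters is correct and worth stressing: the argument needs the Gram matrix $(m_{F_i,F_j})$ to be positive definite, and distinctness alone, with (E5) as stated in this paper, does not supply this. In fact the statement read literally fails for bare distinctness: $\zeta(s)$ and $\zeta(s)/\zeta(2s)$ are distinct members of $\cE$ with identical prime coefficients $a(p)=1$, and since their local factors satisfy $F_{1,p}(s)/F_{2,p}(s)=(1-p^{-2s})^{-1}$, every attainable pair obeys $|z_1|/|z_2|\le\zeta(2\sigma)\le\zeta(2)$ for all choices of $t_p$, so the target $(R,R^{-1})$ is out of reach once $R^2>\zeta(2)$. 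Righetti proves his proposition under the corresponding non-degeneracy (orthogonality) hypothesis, which is also what your sketch uses; so your argument is aligned with the hypothesis actually needed, but the lemma as transcribed here (``distinct'') must be read with that hypothesis added, and your proof should state it as an assumption rather than as a side comment.
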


Next we have a natural extension of \cite[Lemma~2.4]{BT} to two polynomials.
\begin{lem}\label{lem: bt 1}
Let $f,g\in \C [x_1,\ldots,x_N]$ be relatively prime. Then either $f$ is a monomial, or there is an $x\in \C^N$ with $f(x)=0$ while $x_1\cdots x_N\ne 0$ and $g(x)\ne 0$.
\end{lem}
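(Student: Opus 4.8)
The plan is to peel off the monomial part of $f$, pass to a single irreducible factor, and then exploit irreducibility of a hypersurface to find a point avoiding finitely many ``bad'' proper subvarieties. First I would write $f = x_1^{e_1}\cdots x_N^{e_N}\,h$ with $h$ divisible by no $x_i$. Then $f$ fails to be a monomial precisely when $h$ is nonconstant, and the zero sets of $f$ and of $h$ agree on the torus $(\C\setminus\{0\})^N$. Since $\C[x_1,\dots,x_N]$ is a unique factorization domain in which each $x_i$ is prime, coprimality of $f$ and $g$ forces $x_i\nmid g$ for every $i$ and $\gcd(h,g)=1$. Choosing an irreducible factor $h_0$ of the nonconstant polynomial $h$, we see that $h_0$ is still divisible by no $x_i$ and still satisfies $\gcd(h_0,g)=1$, so it suffices to find $x$ with $h_0(x)=0$, $x_1\cdots x_N\neq 0$, and $g(x)\neq 0$.

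Next I would work on the irreducible affine hypersurface $Z:=\{x\in\C^N: h_0(x)=0\}$, which is nonempty and irreducible because $h_0$ is a nonconstant irreducible polynomial over the algebraically closed field $\C$. Consider the Zariski-closed subsets $Z\cap\{x_i=0\}$ for $1\le i\le N$ and $Z\cap\{g=0\}$. Each is a \emph{proper} subset of $Z$: if $Z\subseteq\{x_i=0\}$, the Nullstellensatz gives $h_0\mid x_i^m$ for some $m$, hence $h_0$ is an associate of $x_i$, contradicting $x_i\nmid h_0$; and if $Z\subseteq\{g=0\}$, then $h_0\mid g^m$ for some $m$, so $h_0\mid g$ by irreducibility of $h_0$, contradicting $\gcd(h_0,g)=1$. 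Since an irreducible variety is not a union of finitely many proper closed subsets, some $x\in Z$ lies in none of these, and then $h_0(x)=0$ forces $f(x)=0$ while $x_1\cdots x_N\neq 0$ and $g(x)\neq 0$, as required.

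The computational content here is slight; the only point needing care is that all of the reductions genuinely preserve both coprimality with $g$ and the ``no variable divides it'' property, which is exactly where unique factorization is used. A Nullstellensatz-free alternative is induction on $N$: for $N=1$, any nonzero root of $f$ exists precisely because $f$ is not a monomial, and $\gcd(f,g)=1$ automatically makes $g$ nonvanishing there; for the inductive step one specializes a single variable to a sufficiently generic nonzero value, chosen so that the result stays nonconstant, stays divisible by no remaining variable, and stays coprime to $g$, and then applies the $(N-1)$-variable case. I expect the main obstacle in that route to be precisely this bookkeeping — ensuring a generic specialization does not accidentally create a common factor with $g$ — which is why the geometric argument, staying on a single irreducible hypersurface throughout, is the cleaner one to write down.
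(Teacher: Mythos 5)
Your proof is correct, and it runs on the same two engines as the paper's --- the Nullstellensatz and unique factorization in $\C[x_1,\ldots,x_N]$ --- but packages them differently. The paper argues by contradiction in a single stroke: if every zero of $f$ had some $x_i=0$ or $g=0$, then $x_1\cdots x_N\,g$ would vanish on the zero set of $f$, so the Nullstellensatz gives $f\mid (x_1\cdots x_N)^r g^r$ for some $r$, and coprimality with $g$ then forces every irreducible factor of $f$ to be an associate of some $x_i$, i.e.\ $f$ is a monomial. You instead argue directly: strip off the monomial part, pass to one irreducible factor $h_0$, and use that the irreducible hypersurface $\{h_0=0\}$ cannot be covered by the finitely many proper closed subsets $\{x_i=0\}$ and $\{g=0\}$. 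Your route costs slightly more setup (irreducibility of the hypersurface, which again rests on the Nullstellensatz via the fact that the ideal of $\{h_0=0\}$ is exactly $(h_0)$), but it localizes the point on a specific irreducible component, which is marginally more information than the paper extracts; the paper's version is shorter because it applies the Nullstellensatz once, to the product $x_1\cdots x_N\,g$. One small overstatement in your write-up: coprimality of $f$ and $g$ does not force $x_i\nmid g$ for \emph{every} $i$, only for those $i$ with $x_i\mid f$; fortunately you never use that claim --- all your argument needs is $\gcd(h_0,g)=1$, which you do establish. The inductive alternative you sketch at the end would indeed founder on exactly the bookkeeping you identify, so the geometric (or the paper's contrapositive) argument is the right choice.
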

\begin{proof}
Assume, to the contrary, that every solution of $f(x)=0$ satisfies $x_1\cdots x_N=0$ or $g(x)=0$. Then the polynomial $x_1\cdots x_Ng(x)$ vanishes on the algebraic set
	\[
	\{x\in \C^N: f(x)=0\}.
	\]
Hilbert's Nullstellensatz implies the existence of a positive integer $r$ such that $f(x)$ divides $(x_1\cdots x_N)^rg(x)^r$. Since $f$ and $g$ are relatively prime and $\C[x_1,\ldots x_N]$ is a UFD, it follows that $f(x)$ must divide $(x_1\cdots x_N)^r$. However, this can only happen if $f$ is a monomial. If $f$ is not a monomial, then we conclude that there must be a solution of $f(x)=0$ for which $x_1\cdots x_N\ne 0$ and $g(x)\ne 0$, as claimed.
\end{proof}

Lastly we prove the analogous extension of \cite[Lemma~2.5]{BT}.
\begin{lem}\label{lem: bt 2}
Let $f,g\in \C[x_1,\ldots,x_N]$, neither identically $0$, and suppose that $y\in \C^N$ is a zero of $f$ but not $g$. Then, for any $\varepsilon>0$, there exists $\delta>0$ such that, for any polynomials $\tilde{f},\tilde{g}\in \C[x_1,\ldots,x_N]$ obtained by changing any of the nonzero coefficients of $f,g$ by at most $\delta$ each, there is a $z\in \C^N$ with $|y-z|<\varepsilon$ such that $\tilde{f}(z)=0$ but $\tilde{g}(z)\ne 0$.
\end{lem}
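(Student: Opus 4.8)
The plan is to perturb the zero $y$ of $f$ along a one-dimensional line and use Rouché's theorem (or the argument principle) in a single complex variable. Since $g(y)\ne 0$, by continuity it suffices to find $z$ near $y$ with $\tilde f(z)=0$ and then $\tilde g(z)\ne 0$ is automatic once $\delta$ and $\varepsilon$ are small enough, because $\tilde g$ is close to $g$ and $z$ is close to $y$. So the real content is producing a zero of $\tilde f$ near $y$.

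First I would reduce to the univariate case. Because $f$ is not identically zero, there is some direction $v\in\C^N$ such that the polynomial $\phi(w):=f(y+wv)$ in the single variable $w$ is not identically zero; indeed, for a generic $v$ this holds, and we may further arrange $v$ so that $\phi$ has a zero of some finite order $k\ge 1$ at $w=0$ (we know $\phi(0)=f(y)=0$, and $\phi\not\equiv 0$ forces the zero to have finite multiplicity). Fix a small $\rho>0$ with $0<\rho<\varepsilon/|v|$ such that $\phi$ has no zeros on the circle $|w|=\rho$ other than possibly... in fact no zeros on $|w|=\rho$ at all, by taking $\rho$ smaller than the modulus of the nearest nonzero root of $\phi$. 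Let $m:=\min_{|w|=\rho}|\phi(w)|>0$.

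Next I would control the perturbation. Write $\tilde\phi(w):=\tilde f(y+wv)$. The coefficients of $\tilde\phi$, as a polynomial in $w$, depend linearly (with bounded coefficients, depending only on $y$, $v$, and the degree and number of monomials of $f$) on the coefficients of $\tilde f$. Hence there is a constant $C=C(f,y,v)$ such that $\sup_{|w|=\rho}|\tilde\phi(w)-\phi(w)|\le C\delta$ on the disk $|w|\le\rho$. Choosing $\delta$ small enough that $C\delta<m$, Rouché's theorem gives that $\tilde\phi$ has exactly $k$ zeros (with multiplicity) inside $|w|<\rho$; in particular at least one zero $w_0$ with $|w_0|<\rho$. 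Setting $z:=y+w_0 v$ we get $|z-y|=|w_0||v|<\rho|v|<\varepsilon$ and $\tilde f(z)=\tilde\phi(w_0)=0$.

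Finally, $\tilde g(z)\ne 0$: since $g(y)\ne 0$, pick a neighbourhood $U$ of $y$ and a bound $\delta'$ so that $|g|\ge \tfrac12|g(y)|$ on $U$ and any $\tilde g$ within $\delta'$ of $g$ satisfies $|\tilde g-g|<\tfrac14|g(y)|$ on the compact closure of a small ball around $y$ inside $U$; then $|\tilde g|\ge\tfrac14|g(y)|>0$ there. Shrinking $\varepsilon$ at the outset so that the ball $|z-y|<\varepsilon$ lies in that small ball, and taking $\delta\le\min(\delta',m/C)$, both conclusions hold simultaneously. The main obstacle is the bookkeeping in choosing the direction $v$ so that $\phi\not\equiv 0$ while keeping the Rouché radius $\rho$ and the coefficient-perturbation constant $C$ uniform; once $v$ and $\rho$ are fixed, everything else is a routine $\delta$–$\varepsilon$ argument via Rouché.
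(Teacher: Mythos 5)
Your argument is correct and is essentially the paper's proof: both restrict to a complex line through $y$, choose a radius on which $f$ is bounded away from zero, and apply Rouch\'e's theorem to the perturbed polynomial to produce a zero of $\tilde f$ within $\varepsilon$ of $y$, with $\delta$ depending only on $f,g,y,\varepsilon$. The only cosmetic difference is that you handle $\tilde g$ by a plain continuity bound near $y$ (after harmlessly shrinking the working radius), whereas the paper sets up the same kind of Rouch\'e-type comparison for $g$ on the chosen circle; the content is identical.
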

\begin{proof}
	Take $u$ with $|u|=1$ so that neither $f(y+tu)$ nor $g(y+tu)$ vanishes for all complex $t$. We choose $\varepsilon'$ with $0<\varepsilon'<\varepsilon$ such that $f(y+tu)$ does not vanish on the circle $C_{\varepsilon'}=\{t\in \C:|t|=\varepsilon'\}$. We take $\gamma_1>0$ smaller than the minimum of $f(y+tu)$ on $C_{\varepsilon'}$ and such that $g(y+tu)$ does not vanish on the disk $|t|\le \gamma_1$; we also set $\gamma_2$ as the minimum of $|g(y+tu)|$ for $t\in C_{\varepsilon'}$. On $C_{\varepsilon'}$ we have
	\[
	|f(y+tu)-\tilde{f}(y+tu)|<\delta N_1(1+\varepsilon+|y|)^{\deg f},
	\]
	where $N_1$ is the number of nonzero coefficients of $f$. Similarly, we also have
	\[
	|g(y+tu)-\tilde{g}(y+tu)|<\delta N_2(1+\varepsilon+|y|)^{\deg g}.
	\]
	We take $\delta$ small enough to make the right-hand side of each of the displays above smaller than $\min\{\gamma_1,\gamma_2\}$. Thus we have
	\[
	|f(y+tu)-\tilde{f}(y+tu)|<|f(y+tu)|
	\]
	as well as
	\[
	|g(y+tu)-\tilde{g}(y+tu)|<|g(y+tu)|
	\]
	on $C_{\varepsilon'}$. Rouch\'e's Theorem then implies that $\tilde{f}$ has a zero $z$ satisfying $|y-z|<\varepsilon'<\varepsilon$, while $\tilde{g}$ is nonvanishing on this same disk.
\end{proof}

\section{Proof of Theorem~\ref{thm: >1}}
Before starting the proof, we introduce one last piece of notation for the sake of brevity. Given a polynomial $f\in \cF[x_1,\ldots,x_N]$,
    \[
    f(x_1,\ldots,x_N)=\sum_{i=1}^M\left(\sum_{n\in\langle \cQ_i\rangle}\frac{a_i(n)}{n^s}\right)\prod_{j=1}^Nx_j^{\alpha_{i,j}},
    \]
we write $p_f$ to denote the largest prime in the union $\cup_i\cQ_i$, so that each $\cQ_i$ is contained in the interval $[2,p_f]$

Now we proceed with the proof, which closely follows that of \cite[Theorem~3]{R}. Suppose, after relabeling indices if necessary, that $F_1=G_1,\ldots,F_M=G_M$ for some $M\le \min\{N_f,N_g\}$, but $F_i\ne G_j$ for any $i$ and $j$ whenever $M<i\le N_f$ and $M<j\le N_g$. We set $N:=N_f+N_g-M$ and consider $f$ and $g$ as polynomials in the variables $X_1,\ldots,X_N$. Specifically, we order our variables so that $f$ depends only on $X_1,\ldots,X_{N_f}$, while $g$ depends only on $X_1,\ldots,X_M$ and $X_{N_f+1},\ldots,X_N$. With this in mind, we define auxiliary functions
	\[
	\tilde{f}(X_1,\ldots,X_N;s)=f\left(X_1\prod_{p\le p_{fg}}F_{1,p}(s),\ldots,X_{N_f}\prod_{p\le p_{fg}}F_{N_f,p}(s),\smash[b]{\!\underbrace{0,\ldots,0}_\text{$N-N_f$ times}}\right),
	\]
    \begin{multline*}
    \tilde{g}(X_1,\ldots,X_N;s)=g\left(X_1\prod_{p\le p_{fg}}F_{1,p}(s),\ldots,X_M\prod_{p\le p_{fg}}F_{M,p}(s),\smash[b]{\!\underbrace{0,\ldots,0,}_\text{$N-N_g$ times}}\right. \\
    \left. X_{N_f+1}\prod_{p\le p_{fg}}G_{M+1,p}(s),\ldots,X_{N_g}\prod_{p\le p_{fg}}G_{N_g,p}(s)\right).
    \end{multline*}
Clearly $\tilde{f}(s)$ and $\tilde{g}(s)$ are both polynomials in $\cF[X_1,\ldots,X_N]$ for any $s$ with $\sigma> 1$. Moreover, the coefficients of $\tilde{f}$ and $\tilde{g}$ are holomorphic in the open half-plane $\sigma>1$ and extend continuously to the line $\sigma=1$. Hence, by the maximum modulus principle, there is a real number $t_0$ such that these coefficients are all non-vanishing for $s=1+it_0$. Thus, Lemma~\ref{lem: bt 1} implies the existence of non-zero complex numbers $x_1,\ldots,x_N$ such that
    \[
    \tilde{f}(x_1,\ldots,x_N;1+it_0)=0,
    \]
    \[
    \tilde{g}(x_1,\ldots,x_N;1+it_0)\ne 0
    \]
simultaneously hold. Now take $R\ge 2$ small enough so that $\frac{2}{R}\le |x_n|\le \frac{R}{2}$ for all $1\le n\le N$. We then apply Lemma~\ref{lem: bt 2} with $\varepsilon=\frac{1}{R}$. Doing so, we are guaranteed the existence of an $\eta>0$ such that, for any $1<\sigma\le 1+\eta$, there is a $z(\sigma)\in \C^N$ with the following properties: $\frac{1}{R}\le |z_n(\sigma)|\le R$ for each $1\le n\le N$, and $\tilde{f}(z(\sigma);\sigma+it_0)=0$ but $\tilde{g}(z(\sigma);\sigma+it_0)\ne 0$. By Lemma~\ref{prop: R} with $y=p_{fg}$, we can find real numbers $t_p$ so that
    \[
    \begin{alignedat}{2}
    \prod_{p>p_{fg}}F_{n,p}(\sigma+it_p)&=z_n(\sigma)  &&\hbox{for } 1\le n\le N_f,\\
    \prod_{p>p_{fg}}G_{M+n,p}(\sigma+it_p)&=z_{N_f+n}(\sigma)\qquad &&\hbox{for } 1\le n\le N_g-M.
    \end{alignedat}
    \]
We set $t_p=t_0$ for the remaining primes $p\le p_{fg}$, and so it follows that
    \begin{align*}
    f\left(\prod_pF_{1,p}(\sigma+t_p),\ldots,\prod_pF_{N_f,p}(\sigma+t_p)\right) &= 0,\\
    g\left(\prod_pG_{1,p}(\sigma+t_p),\ldots,\prod_pG_{N_g,p}(\sigma+t_p)\right) &\ne 0.
    \end{align*}
The result then follows from a standard argument using the almost-periodicity of absolutely convergent Dirichlet series.$\hfill\square$

\section{Acknowledgements}
I would like to thank Steve Gonek for suggesting the problem that inspired this paper, a topic which comprised part of my Ph.D. thesis. I am also grateful for the valuable input and advice received from anonymous referees on a preliminary draft. This work was completed during a postdoctoral fellowship at the University of Exeter supported by the Leverhulme Trust (RPG-2017-
320), under the research grant `Moments of
$L$-functions in Function Fields and Random Matrix Theory.'

\bibliographystyle{abbrv}
\bibliography{refs}	

\end{document}